\newtheorem{theorem}{Theorem}
\newtheorem{lemma}{Lemma}
\theoremstyle{remark}
\newtheorem{remark}{Remark}
\title{A Sharp Entropy Condition For The Density Of Angular Derivatives}
\author{Alex Bergman}
\date{\today}
\begin{document}

\maketitle

\begin{abstract}
    Let $f$ be a holomorphic self-map of the unit disc. We show that if $\log (1-\lvert f(z) \rvert)$ is integrable on a sub-arc of the unit circle, $I$, then the set of points where the function f has finite Carathéodory angular derivative on I is a countable union of Beurling-Carleson sets of finite entropy. Conversely, given a countable union of Beurling-Carleson sets, $E$, we construct a holomorphic self-map of the unit disc, $f$, such that the set of points where the function has finite Carathéodory angular derivative is equal to $E$ and $\log(1-\lvert f(z) \rvert)$ is integrable on the unit circle. Our main technical tools are the Aleksandrov disintegration Theorem and a characterization of countable unions of Beurling-Carleson sets due to Makarov and Nikolski.
\end{abstract}

\section{Introduction}

In this note $\mathbb{D}$ is the open unit disc in the complex plane, $\mathbb{T}$ is the unit circle, and $m$ is the normalized Lebesgue measure on $\mathbb{T}$. A holomorphic self-map of the unit disc, $f : \mathbb{D} \to \mathbb{D}$, is said to have angular derivative at $\lambda \in \mathbb{T}$ in the sense of Carathéodory if the following two conditions hold
\begin{enumerate}[(i)]
    \item $f$ has non-tangential limit at $\lambda$ and, $f(\lambda) = \lim_{r \to 1^{-}} f(r\lambda)$, satisfies $\lvert f(\lambda)\rvert = 1$.
    \item The derivative, $f'$, has finite non-tangential limit at $\lambda$.
\end{enumerate}
The set of points where the function $f$ has finite Carathéodory angular derivative will be denoted $C(f)$. Carathéodory angular derivatives play an important role in several areas of analysis. In particular, they are important in perturbation theory of self-adjoint and unitary operators \cite{simon1993spectral}, conformal mapping \cite{MR2150803}, and model and de Branges-Rovnyak spaces \cite{MR1289670}. The above list of topics and references is far from exhaustive. We shall be concerned with holomorphic self-maps of the unit disc which are \textit{locally non-extreme} in the following sense
\begin{equation}\label{eq:cond}
    \int_{I} \log (1-\lvert f (z)\rvert)dm(z) > -\infty,
\end{equation}
where $I \subset \mathbb{T}$ is a relatively open arc. If \eqref{eq:cond} holds then $\lvert f \rvert < 1$ almost-everywhere on $I$ and hence $C(f) \cap I$ has measure $0$. Our main result is that $C(f) \cap I$ is in fact much smaller than a generic set of measure $0$. When $I = \mathbb{T}$ condition \eqref{eq:cond} is equivalent to $f$ being a non-extreme point of the unit ball of $H^{\infty}$, here $H^{\infty}$ is the set of bounded holomorphic functions on the unit disc. This condition plays a crucial role in the theory of de Branges-Rovnyak spaces, see \cite{MR1289670}.

We now introduce the relevant entropy condition. A closed set $E \subset \mathbb{T}$ of measure zero, $\lvert E \rvert = 0$, is called a Beurling-Carleson set of finite entropy if
\begin{equation}\label{eq:BC_cond}
    \sum_{I} \lvert I \rvert \log \left( 1 / \lvert I \rvert \right) < \infty,
\end{equation}
where the sum is taken over the maximal complementary arcs of $E$, here and in the sequel $\lvert E \rvert = m(E)$. Beurling-Carleson sets have appeared in connection with many problems in complex analysis. To be brief we only mention that they are the zero sets of analytic functions in the disc which extend to smooth functions on the circle, \cite{Taylor_Williams_1970}.

Our main result is the following characterization of $C(f)$ for locally non-extreme holomorphic self-maps of the unit disc.

\begin{theorem}\label{thm:local_ang_deriv}
    Let $f : \mathbb{D} \to \mathbb{D}$ be holomorphic and $I \subset \mathbb{T}$ an open arc. If
    \begin{equation*}
        \int_{I} \log(1-\lvert f\rvert) dm > -\infty.
    \end{equation*}
    Then $C(f) \cap I$ is a countable union of Beurling-Carleson sets. Conversely, if $E \subset \mathbb{T}$ is a countable union of Beurling-Carleson sets there exists an analytic function $f : \mathbb{D} \to \mathbb{D}$ satisfying
    \begin{equation*}
        \int_{\mathbb{T}} \log(1-\lvert f\rvert) dm > -\infty,
    \end{equation*}
    and $C(f) = E$.
\end{theorem}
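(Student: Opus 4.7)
The plan for the forward implication is to work with the Aleksandrov--Clark (AC) family $\{\mu_\alpha\}_{\alpha \in \mathbb{T}}$ of $f$. The Julia--Carath\'eodory theorem identifies $\lambda \in C(f)$ as a point mass of $\mu_{f(\lambda)}$ with $\mu_{f(\lambda)}(\{\lambda\}) = 1/|f'(\lambda)|$, and lower semicontinuity of the Julia quotient implies that each stratum
\begin{equation*}
E_n := \{ \lambda \in I \cap C(f) : |f'(\lambda)| \leq n \}
\end{equation*}
is closed in $I$. Since $C(f) \cap I = \bigcup_n E_n$, it suffices to prove each $E_n$ is a Beurling--Carleson set.

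To bound the entropy of each $E_n$, the plan is to apply the Aleksandrov disintegration theorem, which expresses integration in $\lambda \in \mathbb{T}$ as an average over $\alpha$ of integrals against $\mu_\alpha$. Coupled with the identity for the absolutely continuous part,
\begin{equation*}
\frac{d\mu_\alpha^{\mathrm{ac}}}{dm}(\lambda) = \frac{1 - |f(\lambda)|^2}{|\alpha - f(\lambda)|^2},
\end{equation*}
the hypothesis $\log(1 - |f|) \in L^1(I)$ should translate, after disintegrating in $\alpha$, into the finite entropy sum $\sum_k |J_k| \log(1/|J_k|)$ over the complementary arcs $\{J_k\}$ of $E_n$, the intuition being that on each maximal gap $J_k$ adjacent to a point of $E_n$ the function $|f|$ has to descend from $1$ and return, forcing $\int_{J_k} \log(1-|f|)\,dm$ to be at least of order $-|J_k|\log(1/|J_k|)$.

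For the converse, writing $E = \bigcup_n E_n$ as an increasing union of BC sets, the Makarov--Nikolski characterization supplies the building blocks to assemble
\begin{equation*}
f = B \cdot S \cdot O,
\end{equation*}
where $B$ is a Blaschke product with zeros along a sequence in $\mathbb{D}$ clustering on $E$ and satisfying $\sum(1-|z_k|)\log(1/(1-|z_k|)) < \infty$; $S$ is a singular inner function whose representing measure is a weighted sum $\sum 2^{-n} \sigma_n$ of finite-entropy singular measures $\sigma_n$ concentrated on $E_n$; and $O$ is an outer factor tailored to shave the modulus of $BS$ just enough that $|f|<1$ almost everywhere on $\mathbb{T}$ while $\log(1-|f|) \in L^1(\mathbb{T})$. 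At every $\lambda \in E$ the function $f$ inherits finite angular derivative from the essentially inner behaviour of $BS$ near $\lambda$.

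The main obstacle, I expect, is the converse: enforcing the \emph{exact} equality $C(f) = E$ rather than merely the inclusion $E \subseteq C(f)$. The inner part readily produces angular derivatives on $E$, but the outer factor $O$ must be engineered so that $|f|$ never approaches $1$ nontangentially off $E$, i.e., so that the Julia quotient $(1-|f(z)|)/(1-|z|)$ blows up along every nontangential approach to $\mathbb{T} \setminus E$. In the forward direction, the most delicate step is the disintegration bookkeeping: converting the a.e.\ averaging identity for the family $\{\mu_\alpha\}$ into a localized entropy bound on the complementary arcs of a single stratum $E_n$, while correctly accounting for the singular parts of the $\mu_\alpha$ that carry the atoms.
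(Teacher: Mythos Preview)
Your plan departs from the paper in both directions, and in the converse there is a genuine gap.

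For the forward implication the paper does not estimate the entropy of the strata $E_n$ directly. It first uses disintegration to show that for a.e.\ $\alpha$ the restriction $\mu_\alpha|_I$ is absolutely continuous, so after a harmless rotation one may take $\alpha=1$ and write $d\mu_1|_I=\dfrac{1-|f|^2}{|1-f|^2}\,dm$. The log-integrability hypothesis is used exactly once, to manufacture an outer function $F\in H^2$ with $|F|^2=(1-|f|^2)/|1-f|^2$ on $I$ and $|F|=1$ on $\mathbb{T}\setminus I$. Then Lemma~\ref{lemma:clark_ang_derivative} gives $C(f)\cap I=\{\lambda:(z-\lambda)^{-1}F\in H^2\}$, and Theorem~\ref{thm:Makarov_Nikolski} finishes. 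Your ``descent on gaps'' heuristic, that $\int_{J_k}\log(1-|f|)\,dm\lesssim -|J_k|\log(1/|J_k|)$ because $|f|$ must return to $1$ at the endpoints of $J_k$, is essentially an attempt to reprove the Makarov--Nikolski direction from scratch; note, however, that the Julia--Carath\'eodory inequality controls $1-|f(z)|$ only for $z$ in a nontangential approach region in $\mathbb{D}$, not for boundary values $z\in\mathbb{T}$, so the pointwise bound $1-|f(e^{i\theta})|\le n\,|e^{i\theta}-\lambda|$ you implicitly need is not available.

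For the converse the paper again routes everything through Makarov--Nikolski and the Clark correspondence rather than building an inner--outer factorization. One takes the outer $\varphi\in H^2$ with $E=\{\lambda:(z-\lambda)^{-1}\varphi\in H^2\}$, normalizes $\|\varphi\|_2=1$, and \emph{defines} $f$ by forcing its Clark measure at $\alpha=1$ to be $|\varphi|^2\,dm$, i.e.\ $\dfrac{1+f}{1-f}(z)=\displaystyle\int_{\mathbb{T}}\dfrac{\zeta+z}{\zeta-z}\,|\varphi(\zeta)|^2\,dm(\zeta)$. Lemma~\ref{lemma:clark_ang_derivative} then yields $C(f)=E$ immediately, with no separate work for either inclusion, and non-extremeness follows since $(1-|f|^2)/|1-f|^2=|\varphi|^2$ a.e.\ with both $\varphi$ and $1-f$ in $H^2$ and hence log-integrable. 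Your $B\cdot S\cdot O$ scheme runs squarely into the obstacle you flag: since $|BS|=1$ a.e., any outer $O$ with $\log(1-|BSO|)\in L^1$ must satisfy $|O|<1$ a.e., and then for each $\lambda\in E$ you need the nontangential limit $|O(\lambda)|=1$ \emph{and} a finite angular derivative of the product at $\lambda$, while simultaneously killing the angular derivative at every $\lambda\notin E$. Nothing in the outline explains how to engineer $O$ (or $B,S$) to hit $C(f)=E$ exactly; the Clark-measure construction gives this equality for free.
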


The sum in \eqref{eq:BC_cond} increases the more spread out the points of $E$ are. Thus the Beurling-Carleson condition is telling us that the Carathéodory angular derivatives of $f$ must have a certain amount of clumping.

Our proof of Theorem \ref{thm:local_ang_deriv} is based on the Aleksandrov disintegration Theorem and a characterization of countable unions of Beurling-Carleson sets by Makarov and Nikolski. We discuss these results in the next section.

\subsection*{Acknowledgements}
    The author wishes to thank Adem Limani and the anonomous referee for several helpful comments.

\section{Preliminaries}\label{sec:prelims}

    For each $\alpha \in \mathbb{T}$ let $\mu_{\alpha}$ be the measure given by the Herglotz integral formula
\begin{equation*}
    \frac{1 - \lvert f(z) \rvert^{2}}{\lvert \alpha - f(z) \rvert^{2}} = \int \frac{1-\lvert z \rvert^{2}}{\lvert \zeta -z\rvert^{2}}d\mu_{\alpha}(\zeta).
\end{equation*}
The measure $\mu_{\alpha}$ is of finite mass and positive. The measures $\left\{ \mu_{\alpha} \right\}_{\alpha \in \mathbb{T}}$ are called the Aleksandrov-Clark measures of $f$. Information about Aleksandrov-Clark measures can be found in, for example, the survey article \cite{MR2198367} and the notes \cite{MR2394657}. Let $(\mu_{\alpha})_{a}$ denote the absolutely continuous part of the measure $\mu_{\alpha}$. Standard properties of Poisson integrals imply that
\begin{equation*}
    d(\mu_{\alpha})_{a}(z) = \frac{1 - \lvert f(z) \rvert^{2}}{\lvert \alpha - f(z) \rvert^{2}}dm(z).
\end{equation*}

The Aleksandrov-Clark measures satisfy the following disintegration Theorem due to Aleksandrov, \cite{aleksandrov1987multiplicity}.

\begin{theorem}
    Let $h$ be a continuous function on the unit circle. Then
    \begin{equation*}
        \int_{\mathbb{T}} h(z) dm(z) = \int_{\mathbb{T}}\int_{\mathbb{T}} h(z) d\mu_{\alpha}(z)dm(\alpha).
    \end{equation*}
\end{theorem}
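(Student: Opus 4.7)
The plan is to verify the identity first for Poisson kernels, pass to their finite linear combinations by linearity, and then extend by density to all of $C(\mathbb{T})$. Writing $P_{z}(\zeta) = (1-\lvert z \rvert^{2})/\lvert \zeta - z\rvert^{2}$, the Herglotz formula defining $\mu_{\alpha}$ reads $\int_{\mathbb{T}} P_{z}(\zeta)\, d\mu_{\alpha}(\zeta) = P_{f(z)}(\alpha)$. Integrating in $\alpha$ against $dm$ and using that $\int_{\mathbb{T}} P_{w}(\alpha)\, dm(\alpha) = 1$ for every $w \in \mathbb{D}$, I obtain
$$
\int_{\mathbb{T}} \int_{\mathbb{T}} P_{z}(\zeta)\, d\mu_{\alpha}(\zeta)\, dm(\alpha) = 1 = \int_{\mathbb{T}} P_{z}(\zeta)\, dm(\zeta),
$$
so the identity holds for Poisson kernels and, by linearity, for every finite linear combination of them.

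Setting $z = 0$ in the Herglotz formula gives $\mu_{\alpha}(\mathbb{T}) = P_{f(0)}(\alpha)$, which is bounded uniformly in $\alpha$; consequently both sides of the claimed identity define bounded linear functionals on $C(\mathbb{T})$, and it remains only to check that finite linear combinations of Poisson kernels are uniformly dense in $C(\mathbb{T})$. Given $h \in C(\mathbb{T})$ with Poisson extension $\tilde{h}$, the traces $\zeta \mapsto \tilde{h}(r\zeta)$ converge uniformly to $h$ as $r \to 1^{-}$, and using the symmetry $P_{r\zeta}(\eta) = P_{r\eta}(\zeta)$ valid for $\zeta, \eta \in \mathbb{T}$, one has
$$
\tilde{h}(r\zeta) = \int_{\mathbb{T}} P_{r\eta}(\zeta)\, h(\eta)\, dm(\eta),
$$
which for fixed $r < 1$ is approximated uniformly in $\zeta$ by Riemann sums $\sum_{j} c_{j} P_{r\eta_{j}}(\zeta)$ thanks to joint uniform continuity of the kernel on $\mathbb{T} \times \mathbb{T}$. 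These Riemann sums are precisely linear combinations of Poisson kernels, closing the density argument.

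I do not anticipate a serious obstacle. The only delicate points are the measurability of $\alpha \mapsto \int h(\zeta)\, d\mu_{\alpha}(\zeta)$ needed to apply Fubini (which follows from the weak-$*$ continuity of $\alpha \mapsto \mu_{\alpha}$ against continuous test functions, a direct consequence of the Herglotz formula) and the density step above, which is standard but the most technical part of the argument.
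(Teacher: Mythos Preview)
Your argument is correct. The paper itself does not supply a proof of this theorem; it merely states the disintegration identity and attributes it to Aleksandrov, so there is no ``paper's own proof'' to compare against. What you have written is one of the standard routes: verify the identity on Poisson kernels via the defining Herglotz formula, use the uniform bound $\mu_{\alpha}(\mathbb{T}) = P_{f(0)}(\alpha) \leq (1+\lvert f(0)\rvert)/(1-\lvert f(0)\rvert)$ to see that both sides are bounded linear functionals on $C(\mathbb{T})$, and conclude by density of the span of Poisson kernels. All steps are sound, including the symmetry $P_{r\zeta}(\eta) = P_{r\eta}(\zeta)$ and the uniform Riemann-sum approximation for fixed $r<1$.

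One small remark on the measurability comment at the end: the weak-$*$ continuity of $\alpha \mapsto \mu_{\alpha}$ against a general $h \in C(\mathbb{T})$ is not literally immediate from the Herglotz formula, which only gives continuity against Poisson kernels; but it follows at once from your density argument combined with the uniform mass bound, since a uniform limit of continuous functions of $\alpha$ is continuous. So the logic is not circular, just slightly out of order as stated.
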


In fact, Aleksandrov showed that this can be extended to $L^{1}(\mathbb{T})$ functions in a suitable way. The Carathéodory angular derivatives of $f$ can be characterized using the Aleksandrov-Clark measures. We state the following realization of this, which is Lemma 3.4. in \cite{MR2394657}. It can also be deduced by combining (VI-7), (VI-9), and (VI-10) from \cite{MR1289670}.

\begin{lemma}\label{lemma:clark_ang_derivative}
    Let $f$ be a holomorphic self-map of the unit disc with Aleksandrov-Clark measure $\mu_{\alpha}$ and $I \subset \mathbb{T}$ a relatively open arc. Suppose $\mu_{\alpha}$ is absolutely continuous with respect to Lebesgue measure on $I$. Then $f$ has a Carathéodory angular derivative at $\lambda \in I$ if and only if
    \begin{equation*}
        \int_{I} \frac{d\mu_{\alpha}(z)}{\lvert z -\lambda \rvert^{2}} < \infty.
    \end{equation*}
\end{lemma}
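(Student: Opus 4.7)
The approach is to specialize the Aleksandrov--Clark identity to the radial segment $z = r\lambda$, divide by $1-r$, and exploit the Julia--Carathéodory theorem. This yields
\begin{equation*}
\frac{1-|f(r\lambda)|^{2}}{(1-r)\,|\alpha - f(r\lambda)|^{2}} \;=\; \int_{\mathbb{T}} \frac{1+r}{|\zeta - r\lambda|^{2}}\, d\mu_{\alpha}(\zeta), \qquad (\star)
\end{equation*}
which will serve as the bridge between ``$\lambda \in C(f)$'' (boundedness of the left-hand side) and ``$\int_{I} d\mu_{\alpha}(\zeta)/|\zeta-\lambda|^{2} < \infty$'' (finite limit of the right-hand side).

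For the ($\Leftarrow$) direction, I would first note that finiteness of $\int_{I} |\zeta-\lambda|^{-2}\, d\mu_{\alpha}(\zeta)$ together with the finite total mass of $\mu_{\alpha}$ gives $\int_{\mathbb{T}} |\zeta-\lambda|^{-2}\, d\mu_{\alpha}(\zeta) < \infty$, because the integrand is bounded on $\mathbb{T}\setminus I$. The elementary estimate $|e^{i\theta}-r|^{2} \ge 2\sin^{2}(\theta/2)$ for $r\ge 1/2$ yields the uniform domination $\frac{1+r}{|\zeta - r\lambda|^{2}} \le \frac{4}{|\zeta-\lambda|^{2}}$, so by dominated convergence the right-hand side of $(\star)$ tends to $2\int_{\mathbb{T}} |\zeta-\lambda|^{-2}\, d\mu_{\alpha}(\zeta) < \infty$. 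Hence the left-hand side of $(\star)$ is bounded as $r \to 1^{-}$, and since $|\alpha - f(r\lambda)|^{2}\le 4$, the ratio $(1-|f(r\lambda)|)/(1-r)$ stays bounded. The Julia--Carathéodory theorem then furnishes a finite Carathéodory angular derivative at $\lambda$.

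For the ($\Rightarrow$) direction, suppose $f$ has a Carathéodory angular derivative at $\lambda$. The first task is to rule out $f(\lambda) = \alpha$: a standard Herglotz-type computation (take $(1-r)$ times $(\star)$ in the hypothetical case $f(\lambda) = \alpha$ and use $|\alpha - f(r\lambda)|^{2} \sim (1-r)^{2}|f'(\lambda)|^{2}$) shows that in that scenario $\mu_{\alpha}$ would carry an atom at $\lambda$ of mass $1/|f'(\lambda)|$. Since $\mu_{\alpha}$ is absolutely continuous on $I \ni \lambda$, this forces $f(\lambda) \neq \alpha$. Consequently $|\alpha - f(r\lambda)|^{2}\to |\alpha - f(\lambda)|^{2} > 0$, and by Julia--Carathéodory $(1-|f(r\lambda)|^{2})/(1-r) \to 2|f'(\lambda)|$. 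Thus the left-hand side of $(\star)$ has a finite limit, and Fatou's lemma applied to the right-hand side yields $\int_{\mathbb{T}} |\zeta-\lambda|^{-2}\, d\mu_{\alpha}(\zeta) < \infty$, whence the restriction to $I$ is finite as well.

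The most delicate point is the no-atom argument in the forward direction, which genuinely requires the absolute continuity hypothesis together with the identification $\mu_{\alpha}(\{\lambda\}) = 1/|f'(\lambda)|$ to exclude the degenerate case $f(\lambda)=\alpha$; the remaining ingredients are routine applications of Fatou's lemma, dominated convergence, and the Julia--Carathéodory theorem.
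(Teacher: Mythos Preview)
The paper does not prove this lemma; it is quoted from the literature (Lemma~3.4 in \cite{MR2394657}, or alternatively (VI-7), (VI-9), (VI-10) in \cite{MR1289670}), so there is no in-paper argument to compare against. Your proposal is correct and is in fact the standard route: reduce the defining Herglotz identity along the radius $z=r\lambda$, use dominated convergence and Fatou on the Poisson side, and invoke the Julia--Carath\'eodory theorem on the $f$ side. Two small comments. First, in the $(\Rightarrow)$ direction you should note explicitly that $|f'(\lambda)|>0$ (a consequence of Julia--Carath\'eodory when $|f(\lambda)|=1$), so that the asymptotic $|\alpha-f(r\lambda)|\sim|f'(\lambda)|(1-r)$ and the atom mass $1/|f'(\lambda)|$ make sense. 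Second, as the paper's remark following the lemma observes, your atom argument actually shows that the hypothesis ``$\mu_{\alpha}$ absolutely continuous on $I$'' can be weakened to ``$\mu_{\alpha}$ has no point masses on $I$'': you only use the absence of an atom at $\lambda$ to exclude $f(\lambda)=\alpha$.
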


\begin{remark}
    It was pointed out by the referee that it is enough to assume that $\mu_{\alpha}$ has no point masses on $I$ in the previous lemma. This is contained in Lemma 3.4. in \cite{MR2394657}.
\end{remark}

Let $H^{2}$ denote the Hardy space of the unit disc and $A^{\infty}$ be the class of holomorphic functions on the unit disc with smooth extension to the unit circle. As mentioned in the introduction Beurling-Carleson sets are the boundary zero sets $A^{\infty}$. Indeed, Beurling-Carleson sets are also the \textit{strong boundary zero sets} of such functions, in the sense that if $\varphi \in A^{\infty}$, then $\left\{ \lambda \in \mathbb{T} : (z-\lambda)^{-1}\varphi \in A^{\infty} \right\}$ is a Beurling-Carleson set. It is a Theorem of Makarov and Nikolski that countable unions of Beurling-Carleson sets are the strong boundary zero sets for $H^{2}$, \cite{MR0697433}.

\begin{theorem}\label{thm:Makarov_Nikolski}
    Let $\varphi \in H^{2}$. Then the set
    \begin{equation*}
                \left\{ \lambda \in \mathbb{T} : \frac{\varphi}{z-\lambda} \in H^{2} \right\}
    \end{equation*}
    is a countable union of Beurling-Carleson sets. Conversely, let $E \subset \mathbb{T}$ be a countable union of Beurling-Carleson sets. Then there exists an outer function $\varphi \in H^{2}$, such that
    \begin{equation*}
        E = \left\{ \lambda \in \mathbb{T} : \frac{\varphi}{z-\lambda} \in H^{2} \right\}.
    \end{equation*}
\end{theorem}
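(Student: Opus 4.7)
The plan is to handle the two directions separately via the same reformulation: for $\varphi \in H^2$, one has $\varphi/(z-\lambda) \in H^2$ if and only if $\int_{\mathbb{T}} |\varphi(z)|^2/|z-\lambda|^2 dm(z) < \infty$, since finiteness of the integral forces the nontangential limit $\varphi(\lambda) = 0$ and then the quotient automatically lies in $H^2$.

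For the forward direction, given $\varphi \in H^2$ I would stratify $E$ as $E = \bigcup_n E_n$ with $E_n = \{\lambda : \int |\varphi|^2/|z-\lambda|^2 dm \leq n\}$. Each $E_n$ is closed by Fatou's lemma applied as $\lambda_k \to \lambda$ inside $E_n$, and has Lebesgue measure zero since an $H^2$-function that is not identically zero vanishes on at most a set of measure zero. The main step is to show every $E_n$ is a Beurling-Carleson set. Using the standard equivalence of \eqref{eq:BC_cond} with $\int_{\mathbb{T}} \log(1/d(\zeta, E_n)) dm(\zeta) < \infty$, I would attempt to dominate $\log(1/d(\zeta, E_n))$ by an $L^1$-quantity built from $|\varphi|^2$, exploiting the uniform bound on $E_n$ together with a Whitney-type decomposition of $\mathbb{T}\setminus E_n$ into dyadic sub-arcs.

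For the converse direction, I would first treat a single Beurling-Carleson set $F$. Condition \eqref{eq:BC_cond} is equivalent to $\log d(\cdot, F) \in L^1(\mathbb{T})$, so there exists an outer function $\varphi_F \in H^\infty$ with $|\varphi_F(\zeta)|$ comparable to $d(\zeta, F)^b$ on $\mathbb{T}$ for a chosen exponent $b \geq 1$. For $\lambda \in F$ one has $|\varphi_F(z)/(z-\lambda)| \lesssim |z-\lambda|^{b-1}$, which is bounded on $\mathbb{T}$, so $\varphi_F/(z-\lambda) \in H^\infty \subset H^2$; for $\lambda \notin F$, the boundary value $|\varphi_F(\lambda)| > 0$ forces $\varphi_F/(z-\lambda) \notin H^2$. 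Thus $\varphi_F$ realizes $F$ as its strong zero set. For a countable union $E = \bigcup_n F_n$, I would form $\varphi = \prod_n \varphi_{F_n}$ with normalizations chosen so that the product converges to an outer $H^\infty$-function and $|\varphi(\lambda)| > 0$ for every $\lambda \notin E$; for $\lambda \in F_n$ the factor $\varphi_{F_n}/(z-\lambda)$ lies in $H^\infty$ while the remaining factors are uniformly bounded.

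The main obstacle in the forward direction is the quantitative passage from the $L^2$ control $\int |\varphi|^2/|z-\lambda|^2 dm \leq n$ on $E_n$ to the capacity-type bound $\int \log(1/d(\cdot, E_n)) dm < \infty$, which is the technical heart of the Makarov-Nikolski theorem and requires genuine care rather than a soft Fatou-type argument. In the converse direction, the delicate point is arranging pointwise (not merely a.e.) positivity $|\varphi(\lambda)| > 0$ for every $\lambda \notin E$, which likely requires a diagonal choice of exponents $b_n$ and scaling constants matched to an enumeration of the $F_n$.
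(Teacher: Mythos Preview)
The paper does not prove this theorem: it is stated in Section~\ref{sec:prelims} as a result of Makarov and Nikolski with a citation to \cite{MR0697433}, and is then used as a black box in the proof of Theorem~\ref{thm:local_ang_deriv}. The only additional content the paper supplies is the remark immediately following the statement, namely that the sets
\[
E_{n} = \left\{ \lambda \in \mathbb{T} : \lVert (z-\lambda)^{-1}\varphi \rVert_{2} \leq n \right\}
\]
are themselves Beurling--Carleson sets. This is exactly the stratification you propose for the forward direction, so on that structural level your outline matches what the paper indicates.

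As a sketch toward an independent proof your outline is reasonable and honest about its own gaps. The reformulation $\varphi/(z-\lambda)\in H^{2}$ iff $\int_{\mathbb{T}}|\varphi|^{2}/|z-\lambda|^{2}\,dm<\infty$ is correct, the closedness of $E_{n}$ via Fatou and the measure-zero observation are fine, and you correctly flag that the passage from the uniform $L^{2}$ bound on $E_{n}$ to the entropy bound $\int\log(1/d(\cdot,E_{n}))\,dm<\infty$ is the genuine content of the forward direction. For the converse, the single-set construction via an outer function with boundary modulus comparable to $d(\cdot,F)^{b}$ is standard; the infinite-product step, however, is more delicate than you suggest. Ensuring that $|\varphi(\lambda)|>0$ for \emph{every} $\lambda\notin E$ after multiplying infinitely many factors that may each be small near the other $F_{k}$'s is not automatic, and in the original argument this requires a more careful construction than a simple product with rescaled exponents. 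So your proposal is a plausible roadmap rather than a proof, and since the paper itself defers to the literature, there is nothing further to compare.
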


    We remark that if $\varphi \in H^{2}$, then the sets
    \begin{equation*}
        E_{n} = \left\{ \lambda \in \mathbb{T} : \lVert (z-\lambda)^{-1}\varphi \rVert_{2} \leq n \right\},
    \end{equation*}
    are Beurling-Carleson sets.

\section{Proof of the Main Result}\label{sec:proof_thm_1}

    We now turn to the proof of Theorem \ref{thm:local_ang_deriv}. We begin with a Lemma.

    \begin{lemma}\label{lemma:local_ac}
        Let $f$ and $I$ be as in Theorem \ref{thm:local_ang_deriv}. Then for Lebesgue almost-every $\alpha \in \mathbb{T}$ the restriction of $\mu_{\alpha}$ to $I$ is absolutely continuous.
        \begin{proof}
            Recall that the absolutely continuous part of $\mu_{\alpha}$ is given by
            \begin{equation*}
                d(\mu_{\alpha})_{a} = \frac{1-\lvert f \rvert^{2}}{\lvert 1 - \overline{\alpha}f \rvert^{2}}dm.
            \end{equation*}
            Thus for each $\alpha \in \mathbb{T}$
            \begin{equation*}
                \mu_{\alpha}(I) \geq (\mu_{\alpha})_{a}(I) = \int_{I} \frac{1-\lvert f(z) \rvert^{2}}{\lvert 1 - \overline{\alpha}f(z) \rvert^{2}}dm(z).
            \end{equation*}
            Integrating with respect to $\alpha$ and using Fubini's Theorem gives
            \begin{equation*}
                \int_{\mathbb{T}} \int_{I} d\mu_{\alpha}(z)dm(\alpha) \geq \int_{I} \int_{\mathbb{T}} \frac{1-\lvert f(z) \rvert^{2}}{\lvert 1 - \overline{\alpha}f(z) \rvert^{2}}dm(\alpha) dm(z).
            \end{equation*}
            Using the Aleksandrov disintegration Theorem on the left-hand side and that $\lvert f(z) \rvert < 1$ almost-everywhere on $I$ on the right-hand side gives
            \begin{equation*}
                \lvert I \rvert = \int_{\mathbb{T}} \int_{I} d\mu_{\alpha}(z)dm(\alpha) \geq \int_{I} \int_{\mathbb{T}} \frac{1-\lvert f(z) \rvert^{2}}{\lvert 1 - \overline{\alpha}f(z) \rvert^{2}}dm(\alpha) dm(z) = \lvert I \rvert.
            \end{equation*}
            Thus
            \begin{equation*}
                \int_{\mathbb{T}} \left( \mu_{\alpha}(I) - (\mu_{\alpha})_{a}(I) \right)dm(\alpha) = 0.
            \end{equation*}
            Since $\mu_{\alpha}(I) - (\mu_{\alpha})_{a}(I) \geq 0$ for all $\alpha$ it follows that $\mu_{\alpha}(I) = (\mu_{\alpha})_{a}(I)$ for almost-every $\alpha \in \mathbb{T}$ as claimed.
        \end{proof}
    \end{lemma}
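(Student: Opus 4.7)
The plan is to compare two quantities obtained by integrating $\mu_\alpha(I)$ against $m(\alpha)$: the total mass on $I$ and the mass of the absolutely continuous part on $I$. Since $(\mu_\alpha)_s = \mu_\alpha - (\mu_\alpha)_a$ is a positive measure, showing that these two integrals are equal will force $(\mu_\alpha)_s(I) = 0$ for almost every $\alpha$, which (because $I$ is open) is the same as saying $\mu_\alpha$ is absolutely continuous on $I$.

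First, I would apply the Aleksandrov disintegration theorem to the indicator $\mathbf{1}_I$. Since $I$ is a relatively open arc, $\mathbf{1}_I$ is not continuous, so strictly speaking one has to invoke Aleksandrov's $L^1$ extension of the disintegration theorem, or monotone-approximate $\mathbf{1}_I$ from below by continuous functions supported in $I$ and pass to the limit. This yields $\int_{\mathbb{T}} \mu_\alpha(I)\,dm(\alpha) = |I|$. Second, I would use the Poisson density for $(\mu_\alpha)_a$ and Fubini to write
\begin{equation*}
\int_{\mathbb{T}} (\mu_\alpha)_a(I)\,dm(\alpha) = \int_I \int_{\mathbb{T}} \frac{1-|f(z)|^2}{|\alpha - f(z)|^2}\,dm(\alpha)\,dm(z).
\end{equation*}
The inner integral in $\alpha$ is the Poisson integral of the constant function $1$ evaluated at $f(z)$, and equals $1$ whenever $|f(z)| < 1$. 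Here is where the hypothesis $\int_I \log(1-|f|)\,dm > -\infty$ enters: it guarantees $|f| < 1$ almost everywhere on $I$, so the inner integral equals $1$ for $m$-a.e.\ $z \in I$, and the double integral equals $|I|$.

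Combining the two computations gives $\int_{\mathbb{T}}[\mu_\alpha(I) - (\mu_\alpha)_a(I)]\,dm(\alpha) = 0$. Since the integrand is pointwise non-negative, it must vanish for $m$-a.e.\ $\alpha$, so $(\mu_\alpha)_s(I) = 0$ for almost every $\alpha$. As $I$ is open and $(\mu_\alpha)_s$ is a positive Borel measure, vanishing on the open set $I$ means the singular part is supported in $\mathbb{T} \setminus I$, so $\mu_\alpha|_I = (\mu_\alpha)_a|_I$ is absolutely continuous, as desired.

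The main technical obstacle is the minor point about applying disintegration to the discontinuous indicator $\mathbf{1}_I$; everything else is a direct Fubini-plus-Poisson calculation. The conceptual content is that the Aleksandrov disintegration already forces the average of $\mu_\alpha$ to be Lebesgue measure, and the local non-extremality hypothesis is exactly what guarantees that the absolutely continuous parts alone already carry this full average mass on $I$.
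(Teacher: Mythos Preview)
Your proposal is correct and follows essentially the same argument as the paper: both compute $\int_{\mathbb{T}}\mu_\alpha(I)\,dm(\alpha)=|I|$ via Aleksandrov disintegration, compute $\int_{\mathbb{T}}(\mu_\alpha)_a(I)\,dm(\alpha)=|I|$ via Fubini and the Poisson identity using $|f|<1$ a.e.\ on $I$, and conclude from non-negativity of the difference. You are in fact slightly more explicit than the paper about the technicality of applying disintegration to the discontinuous test function $\mathbf{1}_I$ and about why $(\mu_\alpha)_s(I)=0$ on an open $I$ gives absolute continuity of the restriction.
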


    \begin{remark}
        From the proof of the Lemma we see that the conclusion remains valid if we merely assume that $\lvert f(z) \rvert < 1$ almost everywhere on $I$.
    \end{remark}
    We are now ready for the proof of Theorem \ref{thm:local_ang_deriv}.
    
    \begin{proof}[Proof of Theorem \ref{thm:local_ang_deriv}]
        Recall that $f$ is a holomorphic self-map on the unit disc. Suppose first that $f$ is locally non-extreme on the open interval $I$,
        \begin{equation*}
            \int_{I} \log (1-\lvert f(z)\rvert) dm(z) > -\infty.
        \end{equation*}
        We must show that $C(f) \cap I$ is a countable union of Beurling-Carleson sets. By Lemma \ref{lemma:local_ac} we may assume without loss of generality that $\mu = \mu_{1}$ is absolutely continuous on $I$. Indeed, if it is not we may replace $f$ by $\alpha f$ with a suitable $\alpha \in \mathbb{T}$. Under this assumption $f$ has a Carathéodory angular derivative at $\lambda \in \mathbb{T}$ if and only if
        \begin{equation*}
            \int_{\mathbb{T}} \frac{1}{\lvert z -\lambda \rvert^{2}}d\mu(z) < \infty,
        \end{equation*}
        by Lemma \ref{lemma:clark_ang_derivative}. For $\lambda \in I$ this is equivalent to local summability
        \begin{equation*}
            \int_{I}  \frac{1}{\lvert z -\lambda \rvert^{2}} \frac{1-\lvert f(z) \rvert^{2}}{\lvert 1 - f(z) \rvert^{2}} dm(z) = \int_{I} \frac{1}{\lvert z -\lambda \rvert^{2}}d\mu(z) < \infty.
        \end{equation*}
        Since $1-\lvert f \rvert$ is $\log$-integrable on $I$ there exists a unique outer function, $F \in H^{2}$, such that
        \begin{equation*}
            \lvert F(z) \rvert^{2} = \begin{cases}
                \frac{1-\lvert f(z) \rvert^{2}}{\lvert 1 - f(z) \rvert^{2}}, \; \; z \in I \\
                1, \; \; z \in \mathbb{T} \setminus I
            \end{cases}.
        \end{equation*}
        Then
        \begin{equation*}
            \int_{\mathbb{T}} \frac{\lvert F(z) \rvert^{2}}{\lvert z -\lambda \rvert^{2}}dm(z) = \int_{\mathbb{T}\setminus I} \frac{dm(z)}{\lvert z -\lambda \rvert^{2}} + \int_{I} \frac{d\mu(z)}{\lvert z -\lambda \rvert^{2}}.
        \end{equation*}
        Hence,
        \begin{equation*}
            \left\{ \lambda \in \mathbb{T} : F/(z-\lambda) \in H^{2} \right\} = C(f) \cap I.
        \end{equation*}
        Now by the Theorem \ref{thm:Makarov_Nikolski} the left-hand side is a countable union of Beurling-Carleson sets.\newline

        Suppose now that $E \subset \mathbb{T}$ is a countable union of Beurling-Carleson sets. We will construct a non-extreme holomorphic self-map of the unit disc, $f$, such that $C(f) = E$. By Theorem \ref{thm:Makarov_Nikolski} there exists an outer function $\varphi \in H^{2}$ of norm $1$, such that
        \begin{equation*}
            E = \left\{ \lambda \in \mathbb{T} : (z-\lambda)^{-1}\varphi \in H^{2} \right\}.
        \end{equation*}
        Let $f : \mathbb{D} \to \mathbb{D}$, $f(0) = 0$, be the unique holomorphic function, such that
        \begin{equation*}
            \frac{1+f(z)}{1-f(z)} = \int_{\mathbb{T}} \frac{\zeta + z}{\zeta -z} \lvert \varphi \rvert^{2}dm(\zeta).
        \end{equation*}
        It follows from Lemma \ref{lemma:clark_ang_derivative} that $f$ has Carathéodory angular derivative at $\lambda \in \mathbb{T}$ if and only if
        \begin{equation*}
            \int_{\mathbb{T}} \frac{\lvert \varphi(z) \rvert^{2}}{\lvert z -\lambda \rvert^{2}}dm(z) < \infty.
        \end{equation*}
        Thus, by construction, $C(f) = E$. It remains to check that that $f$ is non-extreme. Indeed, by standard properties of Poisson integrals,
        \begin{equation*}
            \frac{1-\lvert f(z) \rvert^{2}}{\lvert 1 - f(z) \rvert^{2}} = \lvert \varphi \rvert^{2}, \text{ for almost every } z \in \mathbb{T}.
        \end{equation*}
        Since $1-f(z)$ and $\varphi$ both belong to $H^{2}$ and hence are $\log$ integrable it follows that
        \begin{equation*}
            \int_{\mathbb{T}} \log \left( 1-\lvert f(z) \rvert \right) dm(z) > -\infty,
        \end{equation*}
        as claimed.
    \end{proof}
    
\bibliographystyle{abbrv}
\bibliography{main}

\end{document}